\newtheorem{theorem}{Theorem}
\newtheorem{proposition}[theorem]{Proposition}
\newtheorem{corollary}[theorem]{Corollary}
\theoremstyle{definition}
\newtheorem{example}[theorem]{Example}
\theoremstyle{remark}
\newtheorem{remark}[theorem]{Remark}
\renewcommand{\phi}{\varphi}
\subjclass[2010]{32A07, 32A36, 32D05,  32U25}
\begin{document}

\title{$L_h^2$-functions in unbounded balanced domains}

\address{Carl von Ossietzky Universitat Oldenburg, Institut fur Mathematik, Postfach 2503, ¨
D-26111 Oldenburg, Germany}

\address{Institute of Mathematics, Faculty of Mathematics and Computer Science, Jagiellonian
University,  \L ojasiewicza 6, 30-348 Krak\'ow, Poland}

\author{Peter Pflug}\email{Peter.Pflug@uni-oldenburg.de}
\author{W\l odzimierz Zwonek}\email{wlodzimierz.zwonek@im.uj.edu.pl}
\thanks{The paper was written while second's author research stay at the Carl von Ossietzky University of Oldenburg supported by the Alexander von Humboldt Foundation. The second Author was also supported by the OPUS grant no. 2015/17/B/ST1/00996 of the National Science Centre, Poland. }

\keywords{balanced domain, Lelong number, Bergman space, $L_h^2$-functions, $L_h^2$-domain of holomorphy}

\begin{abstract} We investigate problems related with the existence of square integrable holomorphic functions on (unbounded) balanced domains. In particular, we solve the problem of Wiegerinck for balanced domains in dimension two. We also give a description of $L_h^2$-domains of holomorphy in the class of balanced domains and present a purely algebraic criterion for homogeneous polynomials to be square integrable in a pseudoconvex balanced domain in $\mathbb C^2$. This allows easily to decide which pseudoconvex balanced domain in $\mathbb C^2$ has a positive Bergman kernel and which admits the Bergman metric.
\end{abstract}
\maketitle

\section{Introduction}
As a starting point for the considerations presented in the paper the following problem of Wiegerinck from 1984 (see \cite{Wie 1984}) may serve. Does the Bergman space $L_h^2(D):=L^2(D)\cap\mathcal O(D)$ allow its dimension to be only either $0$ or $\infty$ for an arbitrary pseudoconvex domain $D\subset\mathbb C^n$? Wiegerinck confirmed the dichotomy in dimension one (see also \cite{Skw 1982}) and gave an example of an unbounded non-pseudoconvex domain $D\subset \mathbb C^2$ with  non-trivial $L_h^2$-functions but having its space $L_h^2(D)$ finitely dimensional. One of the main results of the present paper is Theorem~\ref{thm:description-dimension-two} in which the conjecture is confirmed for the class of balanced domains in dimension two. Moreover, in the same class of domains we present a complete description of domains admitting no non-trivial $L_h^2$-functions answering partially a question posed in \cite{Jar-Pfl 1987} and \cite{Sic 1985} about the characterization of such domains in arbitrary dimension. Let us also mention that a recent result on the infinite dimensionality of $L_h^2(D)$ (see \cite{Gal-Harz-Her 2016}) in a wide class of domains cannot be applied in the general case in our situation because the so called core of a domain may be equal to the whole domain  (belonging to the Siciak class of domains - definition below, see also the discussion in Section~\ref{subsection:trivial-dimension-two}).

Basic objects of study in several complex variables are domains of holomorphy defined as follows. For the family $\mathcal F\subset\mathcal O(D)$ we say that the domain $D\subset\mathbb C^n$ is an {\it $\mathcal F$-domain of holomorphy} if
there are no domains $D_0,D_1\subset\mathbb C^n$ with $\emptyset\neq D_0\subset D_1\cap D$, $D_1\subsetneq D$ such that
for any $f\in \mathcal F$ there exists an $\tilde f\in \mathcal O(D_1)$ with $\tilde f \equiv f$ on $D_0$.

Imitating the description of balanced $H^{\infty}$-domains of holomorphy (see e.\,g. \cite{Sic 1985}) we present an analogous description for balanced $L_h^2$-domains of holomorphy (in arbitrary dimension). The characterization given in Theorem~\ref{thm:main}  is expressed with the tools of extremal functions defined with the aid of homogeneous polynomials. As a consequence the description delivers some kind of an approximation of homogeneous logarithmically plurisubharmonic functions with the help of logarithms of roots of absolute values of homogeneous polynomials. That kind of approximation extends to some degree classical results in more general or similar situations (see the final discussion in  Section~\ref{subsection:domains-of-holomorphy}, \cite{Sib 1975}, \cite{Sic 1985}).

 The proof of Theorem~\ref{thm:description-dimension-two} follows from results of Jucha (see \cite{Juc 2012}) where the problem of Wiegerinck was studied for Hartogs domains. Employing the potential theoretic methods applied by Jucha we present additionally an effective and simple algebraic criterion for a homogeneous polynomial to be square integrable on pseudoconvex balanced domains in dimension two (see Theorem~\ref{thm:criterion}). This theorem has many consequences as to the positive definiteness of the Bergman kernel and the positive definiteness of the Bergman metric is concerned (see Corollaries~\ref{corollary:positive-kernel},~\ref{corollary:bergman-metric},~\ref{corollary:all-polynomials}). In particular, with its help we may conclude the existence of domains of Siciak's type having completely different properties (see Theorem~\ref{theorem:siciak-class}).
 The fact that domains of Siciak's type admits so different possibilities appear as a surprise to us.

 A closely related problem to the study of the Bergman space is the theory of {\it the Bergman kernel} (we restrict our definition to that on the diagonal):
 \begin{equation}
 K_D(z):=\sup\{|f(z)|^2:f\in L_h^2(D),\;||f||\leq 1\},\; z\in D,
 \end{equation}
 {\it the Bergman pseudometric} (defined for $D$ such that $K_D(z)>0$, $z\in D$):
 \begin{equation}
 \beta_D^2(z;X):=\sum_{j,k=1}^n\frac{\partial^2\log K_D}{\partial z_j\bar\partial z_k}(z)X_j\bar X_k,\;z\in D,\; X\in\mathbb C^n,
 \end{equation}
 and {\it the Bergman distance } $b_D$.

Though the problems of positive definiteness, being an $L_h^2$-domain of holomorphy or Bergman complete in the class of bounded balanced pseudoconvex domains are either trivial or fully understood, the situation in the unbounded case is still unclear. We present a few results on these subjects. We think the methods and ideas presented in our paper may help the Reader to develop new methods to cope with the problems. Let us mention here that recently a lot of effort was invested in investigation of these problems in many classes of unbounded domains (see e. g. \cite{Chen 2013}, \cite{Chen 2015},  \cite{Chen-Kam-Ohs 2004}, \cite{Ahn-Gau-Kim 2015}, \cite{Pfl-Zwo 2005} or \cite{Pfl-Zwo 2007}). As we saw a problem of Wiegerinck drew a lot of effort recently and except for partial results already mentioned above the problem was repeated in a recent  survey on problems in the theory of several complex variables (\cite{For-Kim 2015}).

As already mentioned a special role in our considerations and motivation for understanding the phenomena in the class of unbounded balanced domains was played
by a special  class of (in some sense wild) domains. The example of that type was given by Siciak (see \cite{Sic 1982}) and it serves as a good candidate for counterexamples in many situations. Formally, the class $\mathcal D_S$ is the class of all  pseudoconvex balanced domains satisfying the following properties: $D=D_h$ where $h\not\equiv 0$ and $h^{-1}(0)$ is dense in $\mathbb C^n$. Such domains exist in arbitrary dimension (see \cite{Sic 1982}).

\section{Around balanced $L_h^2$-domains of holomorphy}

\subsection{Preliminaries}
Unless otherwise stated we shall work with
\begin{equation}
D:=D_h:=\{z\in\mathbb{C}^n:h(z)<1\}
\end{equation}
being a balanced domain, which means that $h:\mathbb C^n\to[0,\infty)$ is upper semicontinuous and homogeneous (i.\,e. $h(\lambda z)=|\lambda|h(z)$, $\lambda\in\mathbb C$, $z\in\mathbb C^n$). The fact that $D_h$ is pseudoconvex is equivalent to the plurisubharmonicity of $\log h$.
To avoid some trivialities we often assume that $D\neq\mathbb{C}^n$ (equivalently, $h\not\equiv 0$).

\bigskip

As already mentioned the main interest of our paper is devoted to unbounded balanced domains. A natural class of such domains, which may be a good starting point for exploring  properties of unbounded balanced domains, may be the class of \textit{ elementary balanced domains} by which we mean the family of domains defined by functions $h$ given by the formula:
\begin{equation}
h(z):=|A^1 z|^{t_1}\cdot\ldots\cdot|A^N z|^{t_N},\; z\in\mathbb C^n,
\end{equation}
  where $N$ is a positive integer, $A^1,\ldots,A^N:\mathbb C^n\to\mathbb C$ are non-zero linear mappings and the positive numbers $t_1,\ldots,t_N>0$ satisfy $t_1+\ldots+t_N=1$.

 In order to understand the situation in the class of unbounded balanced domains it may be useful to understand how the situation looks like for elementary balanced domains. To work with a general unbounded balanced domain often requires more sophisticated methods to work with.

Denote by
$\mathcal H^d$ the set of polynomials $P\in\mathbb C[z_1,\ldots,z_n]$ that are  homogeneous of degree $d$ or identically equal to $0$.

Define $\mathcal H^d(D):=\mathcal H^d\cap L^2(D)$.

One may find an orthonormal basis $\{P_{d,j}:d=0,1,\ldots, j\in \mathcal J_d\}$ of the space $L_h^2(D)$ where $P_{d,j}\in\mathcal H^d(D)$ (the sets of indices $\mathcal J_d$ are finite but may be also empty).

Then any $f\in \mathcal O(D)$  has the following expansion
\begin{equation}
f(z)=\sum_{d=0}^{\infty}P_d(z),
\end{equation}
where $P_d\in\mathcal H^d$ are uniquely determined and the convergence is locally uniform. If, additionally, $f\in L^2(D)$, then
$P_d=\sum_{j\in \mathcal J_d}\epsilon_{d,j}P_{d,j}$ and
\begin{equation}
||f||^2=\sum_{d=0}^{\infty}||P_d||^2=\sum_{d=0}^{\infty}\sum_{j\in\mathcal J_d}|\epsilon_{d,j}|^2;
\end{equation}
here $\|\cdot\|$ denotes the $L^2_h$-norm.

With any $f\in\mathcal O(D)$ we may define a homogeneous function
\begin{equation}
h_f(z):=\limsup_{d\to\infty}\root{d}\of{|P_d(z)|},\;z\in\mathbb C^n.
\end{equation}
Observe that
\begin{equation}
f(\lambda z)=\sum_{d=0}^{\infty}P_d(z)\lambda^d\in\mathbb C, \;z\in D, \lambda\in\mathbb D.
\end{equation}
Consequently, $h_f(z)\leq 1$, $z\in D$, and thus $h_f\leq h$.

One also gets the following equality
\begin{equation}
K_D(z)=\sum_{d=0}^{\infty}\sum_{j\in\mathcal J_d}|P_{d,j}(z)|^2,\; z\in D.
\end{equation}

Note that
\begin{equation}\label{eq:BK-expansion}
\infty>K_D(\lambda z)=\sum_{d=0}^{\infty}\left(\sum_{j\in\mathcal J_d}|P_{d,j}(z)|^2\right)|\lambda|^{2d},\;\lambda\in\mathbb D, z\in \mathbb C^n, h(z)\leq 1.
\end{equation}

Consequently, we may define the following homogeneous function (note that the above representation shows that the function defined below does not depend on the choice of the orthonormal basis $\{P_{d,j}\}$)
\begin{equation}
h_{BK}(z):=\limsup_{d\to\infty}\root{2d}\of{\sum_{j\in\mathcal J_d}|P_{d,j}(z)|^2},\; z\in \mathbb C^n.
\end{equation}
Note that $h_{BK}(z)\leq 1$, $z\in D$. Therefore, $h_{BK}\leq h$ on $\mathbb C^n$.

Let $f\in L_h^2(D)$ be represented as
\begin{equation}
f(z)=\sum_{d=0}^{\infty}P_d(z)=\sum_{d=0}^{\infty}\sum_{j\in \mathcal J_d}\epsilon_{d,j}P_{d,j}.
\end{equation}
 Then the Schwarz inequality gives
\begin{equation}
|P_d(z)|\leq\sqrt{\sum_{j\in\mathcal J_d}|\epsilon_{d,j}|^2}\sqrt{\sum_{j\in\mathcal J_d}|P_{d,j}(z)|^2}\leq
||f||\sqrt{\sum_{j\in\mathcal J_d}|P_{d,j}(z)|^2},
\end{equation}
from which we  get the following inequality
\begin{equation}
h_f\leq h_{BK}.
\end{equation}
Consequently, we have
\begin{equation}
h_f^*\leq h_{BK}^*\leq h,\; f\in L_h^2(D).
\end{equation}
Recall that $u^\ast$, $u$ a real valued function, means the upper semicontinuous regularization of $u$.

\subsection{$L_h^2$-domains of holomorphy of balanced domains}\label{subsection:domains-of-holomorphy}
Recall that the $L_h^2$-envelope of holomorphy of a balanced domain is univalent; even more, it is a balanced domain (see e.\,g. \cite{Jar-Pfl 2000}). Therefore, for a balanced domain $D_h$ there is another (pseudoconvex) balanced domain $D_{\tilde h}$ such that $D_h\subset D_{\tilde h}$ (equivalently, $\tilde h\leq h$), any $L_h^2$ function on $D$ extends to an element from $\mathcal O(D_{\tilde h})$, and the domain $D_{\tilde h}$ is the largest one with this property.

We give now the precise description of the $L_h^2$-envelope of holomorphy (in other words the description of the function $\tilde h$) of a balanced domain.

\begin{theorem} Let $D=D_h$ be a balanced domain in $\mathbb C^n$. Then its envelope of holomorphy is the domain $D_{h_{BK}^*}$.
\end{theorem}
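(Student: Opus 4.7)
My plan is to prove the equality $D_{\tilde h}=D_{h_{BK}^*}$ of the $L_h^2$-envelope of holomorphy described in the preamble and the domain given by the upper semicontinuous regularization of $h_{BK}$, establishing the two inclusions separately. The preamble already gives $h_{BK}^*\leq h$, hence $D\subset D_{h_{BK}^*}$, so only the comparison with $\tilde h$ is at issue.

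\textbf{Step 1: every $f\in L_h^2(D)$ extends holomorphically to $D_{h_{BK}^*}$.} Writing $f=\sum_d P_d=\sum_{d,j}\epsilon_{d,j}P_{d,j}$, the Schwarz bound recalled in the preamble gives $|P_d(z)|\leq \|f\|\sqrt{K_d(z)}$, where $K_d(z):=\sum_{j\in\mathcal J_d}|P_{d,j}(z)|^2$. It therefore suffices to prove that $\sum_d\sqrt{K_d}$ converges locally uniformly on $D_{h_{BK}^*}$. I would apply Hartogs' lemma to the plurisubharmonic sequence $\phi_d:=\tfrac{1}{2d}\log K_d$, whose pointwise limsup is $\log h_{BK}$ and whose upper semicontinuous regularization is $\log h_{BK}^*$. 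The prerequisite local upper bound of $(\phi_d)$ on all of $\mathbb C^n$ comes from the identity~\eqref{eq:BK-expansion}: for compact $K\subset\mathbb C^n$ pick $M>\sup_K h$, so that $K/M$ is a compact subset of $D$; then
\begin{equation*}
K_d(z)\leq M^{2d}K_D(z/M)\leq M^{2d}C_K,\qquad z\in K,
\end{equation*}
with $C_K:=\sup_{K/M}K_D<\infty$, hence $\phi_d\leq \log M+\tfrac{1}{2d}\log C_K$ on $K$. On a compact $K\subset D_{h_{BK}^*}$ the upper semicontinuity of $\log h_{BK}^*$ gives $\sup_K\log h_{BK}^*\leq -2\delta<0$; Hartogs' lemma then yields $\sup_K\phi_d\leq -\delta$ for large $d$, so $\sqrt{K_d}\leq e^{-d\delta}$ on $K$ and $\sum_d P_d$ converges uniformly there. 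The sum defines $\tilde f\in\mathcal O(D_{h_{BK}^*})$, which agrees with $f$ on $D$ (both being the same Taylor series). This proves $D_{h_{BK}^*}\subset D_{\tilde h}$, i.e.\ $\tilde h\leq h_{BK}^*$.

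\textbf{Step 2: $D_{\tilde h}\subset D_{h_{BK}^*}$.} The extension operator $E\colon L_h^2(D)\to\mathcal O(D_{\tilde h})$, $f\mapsto\tilde f$, has closed graph (if $f_n\to f$ in $L^2$ and $Ef_n\to g$ in $\mathcal O(D_{\tilde h})$, each convergence restricts to locally uniform convergence on $D$, forcing $g|_D=f$ and hence $g=Ef$ by analytic continuation in the connected domain $D_{\tilde h}$), so $E$ is continuous by the closed graph theorem. Evaluation at any $w\in D_{\tilde h}$ is therefore a bounded linear functional on the Hilbert space $L_h^2(D)$, and expanding its Riesz representative in the orthonormal basis $\{P_{d,j}\}$ gives
\begin{equation*}
R(w):=\sup\bigl\{|\tilde f(w)|^2:\|f\|\leq 1\bigr\}=\sum_{d}K_d(w)<\infty,\qquad w\in D_{\tilde h}.
\end{equation*}
For arbitrary $z\in\mathbb C^n$ and $\lambda\in\mathbb C$ with $|\lambda|\tilde h(z)<1$ we have $\lambda z\in D_{\tilde h}$; the homogeneity $K_d(\lambda z)=|\lambda|^{2d}K_d(z)$ then yields the convergent series $\sum_d K_d(z)|\lambda|^{2d}=R(\lambda z)<\infty$. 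Cauchy--Hadamard forces $h_{BK}(z)\leq \tilde h(z)$. Because $D_{\tilde h}$ is pseudoconvex and balanced, $\log\tilde h$ is plurisubharmonic and $\tilde h$ is upper semicontinuous, so passing to regularizations gives $h_{BK}^*\leq\tilde h$, i.e.\ $D_{\tilde h}\subset D_{h_{BK}^*}$.

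\textbf{Main obstacle.} The decisive technical point is the uniform upper bound for $(\phi_d)$ on compact subsets of $\mathbb C^n$ that typically exceed $D$: inside $D$ the bound $K_d\leq K_D$ is immediate, but on the potentially much larger $D_{h_{BK}^*}$ one must pull information back into $D$ via the dilation $z\mapsto z/M$ using identity~\eqref{eq:BK-expansion}. With that bound in hand Hartogs' lemma is standard, and the remaining maximality argument of Step~2 is purely a functional analytic consequence of the closed graph theorem.
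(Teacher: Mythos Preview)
Your proof is correct. Step~1 coincides with the paper's argument (Hartogs' lemma applied to $\tfrac{1}{2d}\log K_d$), though you make explicit the local upper bound on $\mathbb C^n$ via the rescaling $z\mapsto z/M$, a detail the paper leaves implicit.

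Step~2 is a genuinely different route. The paper argues by contradiction: assuming $\tilde h(z_0)<h_{BK}^*(z_0)$, it produces a normalized point $z_1$ with $\tilde h(z_1)<h_{BK}(z_1)=1$ and then tests extendability against the single explicit function
\[
f(z)=\sum_{d,j}\overline{P_{d,j}\bigl(\tfrac{z_1}{1+\epsilon}\bigr)}\,P_{d,j}(z)\in L_h^2(D),
\]
reading off a contradiction from the radius of convergence of $\lambda\mapsto f(\lambda z_1)$. You instead invoke the closed graph theorem to obtain continuity of the extension operator $E\colon L_h^2(D)\to\mathcal O(D_{\tilde h})$, which makes each evaluation $f\mapsto (Ef)(w)$ a bounded functional; its Riesz representative in the basis $\{P_{d,j}\}$ immediately yields $\sum_d K_d(w)<\infty$ on all of $D_{\tilde h}$, and Cauchy--Hadamard then gives $h_{BK}\leq\tilde h$ directly, without the normalization step or the $\epsilon$-limit. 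The two arguments are closely related---the paper's test function is precisely (a rescaled) Riesz representer---but your packaging is cleaner and avoids the auxiliary point $z_1$, at the price of importing the closed graph theorem, which the paper's hands-on construction does not need.
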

\begin{proof}
Let $D_{\tilde h}$ be the $L_h^2$-envelope of holomorphy of $D$. First we show that $\tilde h\leq h_{BK}^*$.

Take any $f\in L_h^2(D)$ and let  $f=\sum_{d=0}^{\infty}\sum_{j\in\mathcal J_d}\epsilon_{d,j} P_{d,j}=\sum_{d=0}^{\infty}P_d$ with $\sum_{d=0}^{\infty}\sum_{j\in\mathcal J_d}|\epsilon_{d,j}|^2<\infty$. Then $|P_d|^2\leq \sum_{j\in\mathcal J_d}|\epsilon_{d,j}|^2\sum_{j\in\mathcal J_d}|P_{d,j}|^2$. We claim that the series defining $f$ converges locally uniformly on $\{h_{BK}^*<1\}$. In fact, take a compact $K\subset\{h_{BK}^*<1\}$. Then there is a $\theta\in(0,1)$ such that $h_{BK}^*(z)<1-2\theta$, $z\in K$. Consequently, the Hartogs lemma implies that $\root{d}\of{\sum_{j\in\mathcal J_{d,j}}|P_{d,j}(z)|^2}<1-\theta$, $z\in K$ and $d$ big enough. Therefore, the series $\sum_{d=0}^{\infty}P_d$ defines a holomorphic function on  $\{h_{BK}^*<1\}$ extending $f$. Thus $\tilde h\leq h_{BK}^\ast$.

 To finish the proof
suppose that there is a $z_0\in\mathbb C^n$ such that $\tilde h(z_0)<h_{BK}^*(z_0)$.
Then there is a $z_1$ such that
\begin{equation}
\tilde h(z_1)<\limsup_{d\to\infty}\root{2d}\of{\sum_{j\in\mathcal J_d}|P_{d,j}(z_1)|^2}=h_{BK}(z_1)=1\leq h(z_1).
\end{equation}
Consequently, for any $\epsilon>0$ the function
\begin{equation}
f:D\owns z\to \sum_{d=0}^{\infty}\sum_{j\in\mathcal J_d}\overline{P_{d,j}\left(\frac{z_1}{1+\epsilon}\right)}P_{d,j}(z)
\end{equation}
is from $L_h^2(D)$. To see this note that the series
$\sum_{d=0}^{\infty}\sum_{j\in\mathcal J_d}\left|P_{d,j}\left(\frac{z_1}{1+\epsilon}\right)\right|^2$ is convergent -- it is sufficient to see that $\limsup_{d\to\infty}\root{d}\of{\sum_{j\in\mathcal J_d}\left|P_{d,j}\left(\frac{z_1}{1+\epsilon}\right)\right|^2}=\frac{1}{(1+\epsilon)^2}<1$. The extendability of $f$ to the domain $D_{\tilde h}$ implies that the function
\begin{equation}
\lambda\to f(\lambda z_1)
\end{equation}
extends to a holomorphic function on $\{|\lambda|<\frac{1}{\tilde h(z_1)}\}$. But
\begin{equation}
f(\lambda z_1)=\sum_{d=0}^{\infty}\sum_{j\in\mathcal J_d}|P_{d,j}(z_1)|^2\frac{\lambda^d}{(1+\epsilon)^d},
\end{equation}
which would imply that $\frac{1}{1+\epsilon}=\limsup_{d\to\infty}\frac{\root{d}\of{\sum_{j\in\mathcal J_d}|P_{d,j}(z_1)|^2}}{1+\epsilon}\leq \tilde h(z_1)$, which is however impossible for $\epsilon>0$ small.
\end{proof}

Below we present a description of balanced $L_h^2$-domains of holomorphy.
\begin{theorem}\label{thm:main} Let $D\subset\mathbb C^n$ be a balanced domain. Then the following are equivalent
\begin{itemize}
\item $D$ is an $L_h^2$-domain of holomorphy,
\item $h(z)=h_f^*(z)$, $z\in \mathbb C^n$ for some function $f\in L_h^2(D)$,
\item $h(z)=h_{BK}^*(z)$, $z\in\mathbb C^n$.
\end{itemize}
\end{theorem}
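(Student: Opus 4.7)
Write $K_d(z) := \sum_{j \in \mathcal J_d}|P_{d,j}(z)|^2$ for brevity. The equivalence (i) $\Leftrightarrow$ (iii) follows immediately from the preceding theorem, since $D$ is an $L_h^2$-domain of holomorphy exactly when it coincides with its $L_h^2$-envelope $D_{h_{BK}^*}$, which for balanced domains is equivalent to $h = h_{BK}^*$. The implication (ii) $\Rightarrow$ (iii) is the automatic collapse of the inequalities $h_f^* \le h_{BK}^* \le h$ recorded in the preliminaries.

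For the substantive direction (iii) $\Rightarrow$ (ii), assuming $h = h_{BK}^*$, I plan to construct a single $f \in L_h^2(D)$ with $h_f^* = h$ by a diagonal assembly of reproducing kernels over a carefully chosen countable dense set. Since each $\tfrac{1}{2d}\log K_d$ is plurisubharmonic, the exceptional set $E := \{h_{BK} < h_{BK}^*\}$ is pluripolar by the classical theorem on upper regularizations of $\limsup$'s of psh functions, so I fix a countable set $\{w_k\}_{k \ge 1} \subset \mathbb C^n \setminus E$ dense in $\mathbb C^n$. For each $k$ with $h(w_k) > 0$ I select an increasing sequence $(d_{k,l})_{l \ge 1}$ along which $K_{d_{k,l}}(w_k)^{1/(2 d_{k,l})} \to h(w_k)$, diagonalized so that all $d_{k,l}$ are pairwise distinct and $(k^2 l^2)^{1/d_{k,l}} \to 1$; I take $Q_{d_{k,l},k} \in \mathcal H^{d_{k,l}}(D)$ to be the unit-norm reproducing element at $w_k$, satisfying $|Q_{d_{k,l},k}(w_k)|^2 = K_{d_{k,l}}(w_k)$ and $|Q_{d_{k,l},k}(z)|^2 \le K_{d_{k,l}}(z)$ for every $z$.

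Setting $f := \sum_{k,l \ge 1}(k^2 l^2)^{-1}\,Q_{d_{k,l},k}$, the mutual orthogonality of summands in distinct homogeneous components of $L_h^2(D)$ yields $\|f\|^2 = \sum_{k,l}(k^2 l^2)^{-2} < \infty$, so $f \in L_h^2(D)$. Because the $d_{k,l}$-th homogeneous part of $f$ is precisely $(k^2 l^2)^{-1}Q_{d_{k,l},k}$, the key calculation
\[
|P_{d_{k,l}}^{(f)}(w_k)|^{1/d_{k,l}} = (k^2 l^2)^{-1/d_{k,l}}\,K_{d_{k,l}}(w_k)^{1/(2 d_{k,l})} \longrightarrow h(w_k) \qquad (l \to \infty)
\]
forces $h_f(w_k) \ge h(w_k)$; combined with the automatic upper bound $h_f \le h_{BK}^* = h$, this gives $h_f(w_k) = h(w_k)$ for every $k$.

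The final and most delicate step, which I expect to be the main obstacle, is promoting this identity on the countable $\{w_k\}$ to the equality $h_f^* \equiv h$ on all of $\mathbb C^n$. The hinge is that at every $w \in \mathbb C^n \setminus E$, the function $h = h_{BK}$ is continuous from $\{w_k\}$ at $w$: fixing $d_m \to \infty$ with $K_{d_m}(w)^{1/(2 d_m)} \to h(w)$, continuity of each polynomial $K_{d_m}$ gives $\liminf_{w_k \to w}h(w_k) \ge K_{d_m}(w)^{1/(2 d_m)}$ for every $m$, hence $\ge h(w)$, while $\limsup_{w_k \to w}h(w_k) \le h(w)$ is automatic from upper semicontinuity. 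This yields
\[
h_f^*(w) \ge \limsup_{w_k \to w}h_f(w_k) = \lim_{w_k \to w}h(w_k) = h(w) \qquad (w \in \mathbb C^n \setminus E),
\]
so $h_f^* = h$ off the pluripolar set $E$. The identity extends across $E$ by the standard fact that two plurisubharmonic functions agreeing off a pluripolar set coincide everywhere (applied to $\log h$ and $\log h_f^*$), giving $h_f^* \equiv h$ on $\mathbb C^n$ and completing the proof.
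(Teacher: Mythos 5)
Your logical skeleton (first $\Leftrightarrow$ third via the envelope theorem, second $\Rightarrow$ third trivially, third $\Rightarrow$ second by construction) is valid but routes the work differently from the paper: there, the substantive step is the first condition implying the second, done in two lines by taking an $f\in L_h^2(D)$ extendable through no boundary point and using the Hartogs lemma to show that $h_f^*(z_0)<h(z_0)=1$ would force local uniform convergence of $\sum P_d$ near $z_0$, hence an extension. Your constructive attack on the third-to-second implication is a legitimate alternative, and everything through the identity $h_f(w_k)=h(w_k)$ is correct: the orthogonality in distinct degrees, the norm computation, and the reproducing-element calculation all work.

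The final step, however, contains a genuine gap. The inequality $\liminf_{w_k\to w}h(w_k)\ge K_{d_m}(w)^{1/(2d_m)}$ would follow from continuity of $K_{d_m}$ only if $h(w_k)\ge K_{d_m}(w_k)^{1/(2d_m)}$ for the \emph{fixed} index $d_m$; but $h_{BK}$ is a $\limsup$ over $d$, not a supremum, and does not dominate individual terms --- already for the unit disc $K_d(z)^{1/(2d)}=\left(\frac{d+1}{\pi}\right)^{1/(2d)}|z|>|z|=h(z)$. Trying to salvage the estimate by interchanging $\liminf_k$ with $\limsup_m$ goes in the forbidden direction. More fundamentally, the conclusion $\limsup_{w_k\to w}h(w_k)\ge h(w)$ is simply unavailable for an arbitrary countable dense subset of $\mathbb C^n\setminus E$: a plurisubharmonic function restricted to a badly chosen countable dense set can even be identically $-\infty$. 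The repair is to build $\{w_k\}$ to order: for each ball $B$ in a countable basis of $\mathbb C^n$ pick $w\in B\setminus E$ with $h(w)>\sup_Bh-\varepsilon_B$, which is possible since the sub-mean-value inequality gives $\sup_Bh=\sup_{B\setminus E}h$ for any Lebesgue-null $E$; with that choice one gets $h_f^*\ge h$ at every point directly and the final pluripolar-gluing step is not needed. As written, though, the third-to-second implication is incomplete, and the detour it represents is avoidable by following the paper's shorter route through the first condition.
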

  \begin{proof}
 The assumption that $D$ is an $L_h^2$-domain of holomorphy implies the existence of an $f\in L_h^2(D)$ which does not extend through any boundary point (see e. g. \cite{Jar-Pfl 2000}). Let us expand $f$ in homogeneous polynomials $f=\sum_{d=0}^{\infty}P_d$.  Suppose that there is a $z_0$ such that $h_f^*(z_0)=\left(\limsup_{d\to\infty}\root{d}\of{|P_d|}\right)^*(z_0)<h(z_0)$. Without loss of generality we may assume that $h_f^*(z_0)<1=h(z^0)$. Then the definition of the regularization and the Hartogs Lemma imply that there is a neighborhood $U$ of $z_0$ such that $|P_d(z)|<(1-\theta)^d$ for big $d$ and $z\in U$ with a suitable $\theta\in(0,1)$. Then the function
 $\sum_{d=0}^{\infty}P_d$ converges locally uniformly in $U$ and, consequently, $f$ extends holomorphically through the boundary point $z_0$ -- a contradiction.

 The equality $h\equiv h_f^*$ trivially implies the equality $h\equiv h_{BK}^*$.

 The lacking implication is, by the assumption of the equality $h\equiv h_{BK}^*$, a direct consequence of the previous theorem.

 \end{proof}

Recall that for  an arbitrary bounded pseudoconvex domain $D$ the fact that it is an $L_h^2$-domain of holomorphy is equivalently described by the boundary behaviour of its Bergman kernel (see \cite{Pfl-Zwo 2002}). We will extend one of these implications to the case of unbounded balanced domains.

\begin{corollary}
Let $D=D_h$ be balanced. Assume that
\begin{equation}
\limsup_{z\to z_0}K_D(z)=\infty,\; \text{ for any }z_0\in\partial D.
\end{equation}
Then $D$ is the $L_h^2$-domain of holomorphy.
\end{corollary}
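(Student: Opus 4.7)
The plan is to invoke Theorem~\ref{thm:main} and reduce the corollary to establishing the equality $h \equiv h_{BK}^*$. Since $h_{BK}^* \leq h$ is already known, I would argue by contradiction: suppose $h_{BK}^*(z_1) < h(z_1)$ for some $z_1 \in \mathbb C^n$. Because $h_{BK}^* \geq 0$, the strict inequality forces $h(z_1)>0$, so by the degree-one homogeneity of both $h$ and $h_{BK}^*$ we may replace $z_1$ by $w_0 := z_1/h(z_1)$, obtaining $h(w_0)=1$ (hence $w_0 \in \partial D$) and $h_{BK}^*(w_0) < 1$. Fix $\alpha \in (h_{BK}^*(w_0), 1)$; upper semicontinuity of $h_{BK}^*$ furnishes a compact neighbourhood $K$ of $w_0$ on which $h_{BK}^* \leq \alpha$. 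It suffices to show that $K_D$ is bounded on $K \cap D$, since this contradicts the hypothesis $\limsup_{z \to w_0} K_D(z) = \infty$.

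The main tool is Hartogs' lemma applied to the plurisubharmonic functions $\phi_d := \tfrac{1}{2d}\log c_d$, where $c_d(z) := \sum_{j\in\mathcal J_d}|P_{d,j}(z)|^2$. By definition of $h_{BK}$ one has $\limsup_d \phi_d = \log h_{BK} \leq \log h_{BK}^* \leq \log\alpha$ pointwise on $K$. The step I expect to be the main obstacle is verifying the local uniform upper bound on $\{\phi_d\}$ needed for Hartogs' lemma, since $|\mathcal J_d|$ may grow with $d$ and the $L^2$-normalisation alone is too weak to give such a bound. I would overcome this using the degree-$2d$ homogeneity $c_d(rz) = r^{2d} c_d(z)$: after shrinking $K$ and using upper semicontinuity of $h$, we may arrange $h \leq 1+\delta_0$ on a neighbourhood of $K$, then fix $r \in (0, 1/(1+\delta_0))$ so that $rK$ is a compact subset of $D$ on which the Bergman kernel is bounded, say by $M$ (by the standard submean-value estimate). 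Since $c_d(rz) \leq K_D(rz) \leq M$, we deduce $c_d(z) \leq M/r^{2d}$ on $K$, whence $\phi_d(z) \leq -\log r + (\log M)/(2d)$ uniformly in $d$ and $z \in K$.

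Hartogs' lemma then yields that for any $\epsilon > 0$, $\phi_d \leq \log \alpha + \epsilon$ on $K$ for all $d$ sufficiently large, i.e.\ $c_d(z) \leq (\alpha e^\epsilon)^{2d}$ on $K$. Choose $\epsilon$ small enough that $\beta := \alpha e^\epsilon < 1$. Then the expansion
\begin{equation}
K_D(z) = \sum_{d=0}^{\infty} c_d(z) \leq \sum_{d < d_0} c_d(z) + \sum_{d \geq d_0} \beta^{2d}, \quad z \in K \cap D,
\end{equation}
combined with the fact that the first (finite) sum is bounded on $K$ by the estimate from the previous paragraph and the second is a convergent geometric series, shows that $K_D$ is uniformly bounded on $K \cap D$. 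This contradicts $\limsup_{z\to w_0}K_D(z)=\infty$, so $h \equiv h_{BK}^*$ and the conclusion follows from Theorem~\ref{thm:main}.
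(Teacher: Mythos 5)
Your proposal is correct and follows essentially the same route as the paper: reduce via Theorem~\ref{thm:main} to $h\equiv h_{BK}^*$, pick a boundary point where $h_{BK}^*<1$, and use the Hartogs lemma to bound $\sum_{j\in\mathcal J_d}|P_{d,j}|^2$ geometrically on a neighbourhood, contradicting the blow-up of $K_D$ there. The only difference is that you spell out (via homogeneity of $c_d$ and boundedness of $K_D$ on the compact set $rK\subset D$) the locally uniform upper bound needed for the Hartogs lemma, which the paper dispatches with ``as usual.''
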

\begin{proof}
 Suppose the opposite. Then there is a $z_0\in\partial D$ such that $h_{BK}^*(z_0)<1$. Then, as usual, there is a bounded neighborhood $U$ of $z_0$ such that $\sum_{j\in\mathcal J_d}|P_{d,j}(z)|^2\leq (1-\theta)^d$ for some $\theta\in(0,1)$, $d$ big enough and $z\in U$. Consequently, $\sum_{d=0}^{\infty}\sum_{j\in\mathcal J_d}|P_{d,j}(z)|^2<M$, $z\in U$, for some $M<\infty$. This expression, however, gives the value of $K_D(z)$ for $z\in U\cap D$, which contradicts the assumption.
  \end{proof}

  \begin{remark} Observe that for an arbitrary domain with univalent envelope of holomorphy the condition in the above corollary implies that $D$ is automatically pseudoconvex (see \cite{Nik-Pfl-Tho-Zwo 2009}).

  It remains an open question whether any balanced $L^2_h$-domain of holomorphy satisfies this condition as it is the case for bounded domains.

  \end{remark}

    It is intriguing that in many cases the upper regularization in Theorem~\ref{thm:main}  is
not necessary (at least in the case of the function $h_{BK}$). To be more concrete. Recall that pseudoconvex bounded balanced domains are always $L_h^2$-domains of holomorphy (see \cite{Jar-Pfl 1996}). Or even more, making use of the Theorem of Takegoshi-Ohsawa (\cite{Ohs-Tak 1987}) we get that for the bounded pseudoconvex balanced $D$ we have
(see \cite{Jar-Pfl-Zwo 2000})
\begin{equation}
\lim_{|\lambda|\to 1}K_D(\lambda z)=\infty,\; h(z)=1.
\end{equation}
This implies that for a fixed $z$ with $h(z)=1$ the series in (\ref{eq:BK-expansion}) as the power series of the variable $|\lambda|$ has the radius of convergence equal to one (and thus $h_{BK}(z)=1$). So we may rewrite the above result as follows:
\begin{proposition}\label{prop:bounded-case}
Let $D=D_h$ be a bounded pseudoconvex balanced domain, i.\,e. $h$ is logarithmically plurisubharmonic and $h^{-1}(0)=\{0\}$. Then we have the equality
\begin{equation}
h_{BK}(z)=h(z),\; z\in\mathbb C^n.
 \end{equation}
\end{proposition}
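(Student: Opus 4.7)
The plan is to verify the inequality in both directions. The preliminary discussion before Theorem~\ref{thm:main} already gives $h_{BK}\leq h$ on $\mathbb C^{n}$, so the real content is the reverse inequality $h\leq h_{BK}$. Both functions are homogeneous of degree one, and the boundedness hypothesis $h^{-1}(0)=\{0\}$, together with the trivial observation $h_{BK}(0)=0$ (since $P_{d,j}(0)=0$ for every $d\geq 1$), lets me reduce to checking that $h_{BK}(z)=1$ whenever $h(z)=1$.

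Fix such a point $z$. I would then plug into equation (\ref{eq:BK-expansion}) and regard it as a non-negative power series in the real variable $\mu:=|\lambda|^{2}$:
\[
K_{D}(\lambda z)=\sum_{d=0}^{\infty}a_{d}\,\mu^{d},\qquad a_{d}:=\sum_{j\in\mathcal J_{d}}|P_{d,j}(z)|^{2}.
\]
Because $\lambda z\in D$ for every $|\lambda|<1$, the left-hand side is finite for all $\mu\in[0,1)$, so the radius of convergence of this power series is at least $1$. The key input, quoted in the paragraph preceding the proposition and ultimately coming from the Ohsawa--Takegoshi extension theorem via \cite{Jar-Pfl-Zwo 2000}, is
\[
\lim_{|\lambda|\to 1^{-}}K_{D}(\lambda z)=+\infty,
\]
which forces the radius of convergence to be exactly $1$: indeed, if it were strictly larger, the sum would stay bounded on some neighbourhood of $\mu=1$.

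Applying the Cauchy--Hadamard formula to the series $\sum a_{d}\mu^{d}$ yields $\limsup_{d\to\infty}a_{d}^{1/d}=1$, and taking square roots gives
\[
h_{BK}(z)=\limsup_{d\to\infty}\root{2d}\of{a_{d}}=\left(\limsup_{d\to\infty}a_{d}^{1/d}\right)^{1/2}=1=h(z),
\]
completing the proof. There is essentially no obstacle here, because all of the analytic work is packaged into the cited boundary behaviour $K_{D}(\lambda z)\to\infty$; everything else is a one-line application of Cauchy--Hadamard together with homogeneity. The only minor thing worth being explicit about is that the positivity of the coefficients $a_{d}$ is what allows the radius of convergence to be read off directly from the monotone boundary blow-up of $K_{D}$ along the ray $\lambda\mapsto\lambda z$.
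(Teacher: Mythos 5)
Your argument is correct and is essentially identical to the paper's: the authors also reduce to $h(z)=1$ by homogeneity, invoke the Ohsawa--Takegoshi-based blow-up $\lim_{|\lambda|\to 1}K_D(\lambda z)=\infty$ from \cite{Jar-Pfl-Zwo 2000}, and read off from (\ref{eq:BK-expansion}) that the power series in $|\lambda|$ has radius of convergence exactly one, hence $h_{BK}(z)=1$. Your write-up merely makes the Cauchy--Hadamard step and the treatment of $z=0$ explicit, which the paper leaves implicit.
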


\begin{remark}
It is unclear how to get Proposition~\ref{prop:bounded-case} for arbitrary balanced $L_h^2$-domains of holomorphy (not necessarily bounded).

Let us once more underline that the above results except for their description of notions related with $L_h^2$-domain of holomorphy of balanced domains may also serve as results on approximation of homogeneous, logarithmically plurisubharmonic functions with the help of expressions of the form $\limsup_{d\to\infty}\root{d}\of{|P_d|}$ or $\limsup_{d\to\infty}\root{2d}\of{\sum_{j\in\mathcal J_d}|P_{d,j}(z)|^2}$ (sometimes after taking the upper semicontinuous regularization).
\end{remark}

\section{Bergman spaces in two-dimensional balanced domains}

Consider the following mapping
\begin{equation}
\Phi: \mathbb C^{n-1}\times(\mathbb C\setminus\{0\})\owns(z^{\prime},z_{n})\to\left(\frac{z^{\prime}}{z_n},z_n\right).
\end{equation}

The above mapping when restricted to $D_h$ allows to reduce the problem of describing  the dimension of the  space $L_h^2(D_h)$ to that of $L_h^2(G_{\phi})$ where
\begin{equation}
\phi(w^{\prime}):=\log h(w^{\prime},1),\; G_{\phi}:=\{w\in\mathbb{C}^n:|w_n|<\exp(-\phi(w^{\prime}))\}.
\end{equation}
In other words in order to find  $\dim L_h^2(D_h)$ it suffices to solve the same problem for a Hartogs domain with basis $\mathbb C^{n-1}$ and plurisubharmonic function $\phi:\mathbb C^{n-1}\to[-\infty,\infty)$ with the additional property $\limsup_{||w^{\prime}||\to\infty}(\phi(w^{\prime})-\log||w^{\prime}||)<\infty$ (obviously, here $\|\cdot\|$ is nothing than the Euclidean norm); we write that $\phi\in\mathcal L$.

\subsection{ Description of balanced domains in $\mathbb C^2$ with  trivial Bergman spaces}\label{subsection:trivial-dimension-two}

Below we study the question of Wiegerinck for the class of balanced pseudoconvex domains in $\mathbb C^2$ applying results from \cite{Juc 2012}.

Recall that
\begin{equation}
\limsup_{|\lambda|\to\infty}(\phi(\lambda)-\log|\lambda|)=\limsup_{|\lambda|\to\infty}\log h((1,\frac{1}{\lambda})\leq\log h(1,0)<\infty.
\end{equation}

\begin{theorem}\label{thm:description-dimension-two}
Let $D=D_h$ be a balanced pseudoconvex domain in $\mathbb C^2$. Then $L_h^2(D)=\{0\}$ or it is infinitely dimensional. Moreover, $L_h^2(D)=\{0\}$ iff $h(z)=|Az|^t|Bz|^{1-t}$ or $h(z)=|Az|$ or $h(z)=0$, where $A,B$ are non trivial linear mappings $\mathbb C^2\to\mathbb C$ and $t\in(0,1)$.
  \end{theorem}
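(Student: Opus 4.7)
The plan is to apply the biholomorphism $\Phi(z_1,z_2)=(z_1/z_2,z_2)$ and the reduction to Hartogs domains recorded just before the theorem, and then to invoke Jucha's analysis of the Wiegerinck problem in~\cite{Juc 2012}. Under $\Phi$, $D_h\setminus\{z_2=0\}$ is biholomorphic to $G_\phi\setminus\{w_2=0\}$ with $\phi(w):=\log h(w,1)\in\mathcal L$ subharmonic, and the removed set has measure zero. Expanding any $f\in L_h^2(D_h)$ as $f=\sum_d P_d$ in homogeneous polynomials and setting $p_d(w):=P_d(w,1)$ (a one-variable polynomial of degree $\le d$), one has $(f\circ\Phi^{-1})(w_1,w_2)=\sum_d p_d(w_1)w_2^d$, and polar coordinates in $w_2$ together with the Jacobian $|w_2|^2$ of $\Phi^{-1}$ yield
\[
\|f\|^2=\pi\sum_{d=0}^\infty\frac{1}{d+2}\int_{\mathbb C}|p_d(w)|^2 e^{-2(d+2)\phi(w)}\,dA(w).
\]
Thus $L_h^2(D_h)$ splits as the orthogonal Hilbert sum $\bigoplus_{d\ge0}\tilde{\mathcal A}_d$, where $\tilde{\mathcal A}_d$ consists of polynomials $p$ with $\deg p\le d$ and $\int_{\mathbb C}|p|^2 e^{-2(d+2)\phi}\,dA<\infty$.

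For the ``if'' direction I would check directly that each listed form of $h$ makes every $\tilde{\mathcal A}_d$ trivial. After a linear change of variables one may assume $\phi$ takes the canonical shape $\phi(\lambda)=t\log|\lambda|$ up to an additive constant in the case $h=|Az|^t|Bz|^{1-t}$ with $t\in(0,1)$ (the second Riesz atom then sitting at infinity), or $\phi=\log|\lambda|$ resp.\ $\phi\equiv 0$ in the case $h=|Az|$, or $\phi\equiv-\infty$ in the case $h\equiv 0$. Local integrability of $|p|^2 e^{-2(d+2)\phi}$ near a Lelong pole of mass $\nu$ forces $\mathrm{ord}(p)>(d+2)\nu-1$ there, while the analogous test at infinity bounds $\deg p$ from above; when the total Riesz mass on $\mathbb P^1$ equals $1$ and is supported on at most two atoms, the two constraints collide — forcing a total vanishing strictly greater than $d$ against $\deg p\le d$ — and one concludes $p\equiv 0$. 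The case $h\equiv 0$ is trivial since $L_h^2(\mathbb C^2)=\{0\}$.

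The main obstacle will be the converse direction together with the dichotomy: showing that whenever $\phi$ is not of this rigid two-atom shape, $L_h^2(D_h)$ is actually infinite-dimensional. This is exactly what Jucha's potential-theoretic construction in~\cite{Juc 2012} accomplishes for Hartogs domains over $\mathbb C$ with weight $\phi\in\mathcal L$: whenever the Riesz measure of $\phi$ on $\mathbb P^1$ is not concentrated on at most two atoms with weights summing to $1$, he produces a nonzero polynomial in some $\tilde{\mathcal A}_{d_0}$, and by multiplying by suitably chosen polynomial factors that do not create fresh obstructions at the Lelong singularities or at infinity, obtains an infinite family of linearly independent elements in the spaces $\tilde{\mathcal A}_{d_0+j}$, $j=0,1,2,\ldots$. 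Translating back through $\Phi$, the Riesz atoms of $\phi$ on $\mathbb C\cup\{\infty\}$ correspond exactly to linear forms on $\mathbb C^2$ and their weights to the exponents appearing in the representation of $h$, so triviality of $L_h^2(D_h)$ is equivalent to $h$ being one of the three listed forms.
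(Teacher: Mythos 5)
Your proposal follows essentially the same route as the paper: reduce via $\Phi$ to the Hartogs domain $G_\phi$ with $\phi=\log h(\cdot,1)\in\mathcal L$, split $L_h^2$ along the homogeneous expansion into weighted one-variable polynomial spaces, and invoke Jucha's Theorem 4.1 both for the $0/\infty$ dichotomy and for the characterization of triviality; your direct verification of the ``if'' direction via the Lelong-number integrability test is the same computation the paper records separately as Theorem~\ref{thm:criterion}. The one step you elide is the passage from Jucha's normal form to the list of three functions $h$. Jucha only gives $\phi(\lambda)=\sum_j\alpha_j\log|\lambda-a_j|+g(\lambda)$ with $g$ harmonic on $\mathbb C$, at most two positive $\alpha_j$ and $\sum_j\alpha_j\le 1$; a priori $h(z)=|z_2|\exp\bigl(\phi(z_1/z_2)\bigr)$ could therefore carry a nonconstant harmonic factor $e^{g}$ and fail to be a product of powers of linear forms. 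The paper closes this using $\phi\in\mathcal L$: the resulting bound $g(\lambda)\le C+\bigl(1-\sum_j\alpha_j\bigr)\log|\lambda|$ at infinity, combined with a Liouville-type argument for harmonic functions, forces $g\equiv\text{const}$ (and fixes the total Lelong mass on $\mathbb P^1$, counting the point at infinity, to be $1$). Your sentence asserting that the Riesz atoms of $\phi$ on $\mathbb C\cup\{\infty\}$ ``correspond exactly to linear forms'' silently assumes this normalization; you should add the short Liouville step, since it is the only part of the argument not already contained in Jucha's theorem.
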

 \begin{proof} Without loss of generality assume that $D\neq\mathbb C^2$. It follows from Theorem 4.1 in \cite{Juc 2012} that $L_h^2(D)=\{0\}$ iff
$\phi(\lambda)=\sum_j\alpha_j|\lambda-a_j|+g(\lambda)$, where $\alpha_j>0$, $g$ is harmonic on $\mathbb C$, and some additional condition on the $\alpha_j$'s is satisfied. And in the case $L_h^2(D)$ is not trivial it is infinitely dimensional.

We know (see e. g. \cite{Juc 2012}) that $\triangle \phi\leq 2\pi$. In particular, $\sum_j\alpha_j\leq 1$. It also follows from the description of the triviality of $L_h^2(G_{\phi})$ in Jucha's paper (conditions on $\alpha_j$) that at most two $\alpha_j$'s are positive.

Then
\begin{equation}
\limsup_{|\lambda|\to\infty}\left(\sum_j\alpha_j\log\left|1-\frac{a_j}{\lambda}\right|+\left(\sum_j\alpha_j-1\right)\log|\lambda|+g(\lambda)\right)<\infty
\end{equation}
from which we conclude that
\begin{equation}
\limsup_{|\lambda|\to\infty}\left(\left(\sum_j\alpha_j-1\right)\log|\lambda|+g(\lambda)\right)<\infty.
\end{equation}
The above function is from $\mathcal L$, $g$ is harmonic on $\mathbb C$, $\sum_j\alpha_j\leq 1$ (and the sum is taken over a finite set). Thus,  one easily gets from the standard properties of harmonic functions that $g\equiv const$ and $\sum_j\alpha_j=1$, which easily finishes the proof.
  \end{proof}

\begin{remark} The above description of balanced domains with trivial Bergman space may be formulated as follows: $L_h^2(D)=\{0\}$ iff
$D$ is linearly equivalent to $\mathbb C^2$, $\mathbb C\times\mathbb D$ or $\{z\in\mathbb C^2:|z_1|^t|z_2|^{1-t}<1\}$ for some $t\in(0,1)$.
\end{remark}

 \begin{remark} The above result leaves many questions open. For instance does the dichotomy on the dimension of $L_h^2(D)$ remain true for pseudoconvex balanced domains in higher dimension?

 What is the description of pseudoconvex balanced domains with a trivial Bergman space in higher dimension? Do they have to be elementary balanced domains as it is the case in dimension two?

 Is every pseudoconvex balanced domain of holomorphy with non-trivial Bergman space an $L_h^2$-domain of holomorphy (as it is the case for the bounded domains)?

It follows from the above result that domains from the class $\mathcal D_S$ (in dimension two) always admit an infinitely dimensional Bergman space. Are these domains also $L_h^2$-domains of holomorphy?
\end{remark}

\begin{remark}
 Recall that the main result in \cite{Gal-Harz-Her 2016} assures the infinite dimensionality of $L_h^2(D)$ in the case the core of the domain is different from $D$. Note that domains from the class $\mathcal D_S$ do not satisfy the equality $\mathfrak c'(D)=D$. Therefore, the infinite dimensionality of $L^2_h(D)$ for these domains cannot be concluded from \cite{Gal-Harz-Her 2016}.

 \end{remark}

   \subsection{Square integrable homogeneous polynomials}

 Let $h:\mathbb C^2\to[0,\infty)$ be a logarithmically plurisubharmonic, homogeneous function with $h\not\equiv 0$.

For $[v]\in\mathbb P^1$ with $v_2\neq 0$ (the case when $v_1\neq 0$ is defined appropriately) define
\begin{equation}
\nu(\log h,[v]):=\nu(\log h(\cdot,v_2),v_1),
\end{equation}
where $\nu(u,\cdot)$ denotes the Lelong number of a subharmonic function $u$. Note that the notion is well-defined (it does not depend on the choice of a representant of $[v]$ and the number of the chosen variable).

Recall that for a subharmonic function $u:D\to[-\infty,\infty)$, $z_0\in D$ ($D\subset\mathbb C$ a domain)  \textit{ the Lelong number is given as} (see e. g. \cite{Ran 1995}):
\begin{equation}
\nu(u,z_0):=\lim_{r\to 0}\frac{\frac{1}{2\pi}\int_0^{2\pi}u(z_0+re^{it})dt}{\log r}=\lim_{r\to 0}\frac{\max_{|\lambda|=r}u(z_0+\lambda)}{\log r}=\triangle u(\{z_0\}).
\end{equation}

 We shall need the following properties of $\nu$ that will follow directly from that of the standard Lelong number:

\begin{itemize}
 \item $\sum_{[v]\in\mathbb P^1}\nu(\log h,[v])\leq 1$, $[v]\in\mathbb P^1$;\\
 \item  $\nu(t\log h_1+(1-t)\log h_2,\cdot)=t\nu(\log h_1,\cdot)+(1-t)\nu(\log h_2,\cdot)$, $t\in[0,1]$.\\
           \end{itemize}

           For an element $[v]\in\mathbb P^1$ (with $v_2\neq 0$) and an $\epsilon>0$ we introduce the conic neighborhood of $[v]$ in $\mathbb C^2$ as follows (in the case $v_2=0$ we may define it analoguously with $v_1$ chosen):
           \begin{equation}
           U([v],\epsilon):=\left\{\frac{\mu}{v_2}(v_1,\lambda):|\lambda-v_1|<\epsilon|v_2|,\lambda, \mu\in\mathbb C\right\}.
           \end{equation}

For $0\not\equiv Q\in\mathcal H^d$ we calculate (assuming that $v_2\neq 0$):
\begin{multline}
\int_{\{h<1\}\cap U([v],\epsilon)}|Q(z)|^2d\mathcal L^4(z)=\\
\int_{\{w_1:|w_1-v_1|<\epsilon|v_2|\}}\left(\int_{\{w_2:|w_2|<\exp(-\phi(w_1))\}}|Q(w_1,1)|^2|w_2|^{2d+2}d\mathcal L^2(w_2)\right)d\mathcal L^2(w_1)=\\
\frac{\pi}{d+2}\int_{\{w_1:|w_1-v_1|<\epsilon|v_2|\}}|Q(w_1,1)|^2\exp\left(-2(d+2)\phi(w_1)\right)d\mathcal L^2(w_1).
\end{multline}

It follows from  properties of the Lelong numbers (see Corollary 2.4(a) in [Juc 2012]) that the last expression is finite (for sufficiently
small $\epsilon>0$) iff
\begin{equation}
m(Q,[v])>\nu(\log h,[v])(deg Q+2)-1.
\end{equation}
Here $m(Q,[v])$ denotes the multiplicity of the zero of the homogeneous polynomial
by which we mean the multiplicity of the zero of the polynomial $Q(v_1,\lambda)$ at $\lambda=v_2$ (if $v_1\neq 0$) or the multiplicity of the polynomial $Q(\lambda,v_2)$ at $\lambda=v_1$ (if $v_2\neq 0$).

Making use of the standard compactness argument we get the following.

\begin{theorem}\label{thm:criterion} Fix $n=2$.
Let $Q\in\mathcal H^d$ and let $h:\mathbb C^2\to[0,\infty)$ be a logarithmically plurisubharmonic homogeneous function. Then $Q\in L_h^2(D_h)$ iff
 \begin{equation}
 m(Q,[v])>\nu(\log h,[v])(deg Q+2)-1\; \text{ for any $[v]\in\mathbb P^1$}.
 \end{equation}
\end{theorem}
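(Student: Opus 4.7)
The plan is to combine the explicit local calculation carried out in the paragraph immediately preceding the theorem statement (together with the cited Corollary 2.4(a) of \cite{Juc 2012}) with a finite-covering argument based on compactness of $\mathbb P^1$. The one-variable criterion of Jucha already decides, for each $[v]\in\mathbb P^1$, whether the integral of $|Q|^2$ over the conic neighborhood $U([v],\epsilon)$ is finite for small $\epsilon>0$; the theorem is then a question of passing from local finiteness at every projective direction to global finiteness on $D_h$.

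First I would do the ``only if'' direction, which is immediate. If $Q\in L_h^2(D_h)$ and $[v]\in\mathbb P^1$, then for every $\epsilon>0$
\begin{equation*}
\int_{D_h\cap U([v],\epsilon)}|Q|^2\,d\mathcal L^4\leq\|Q\|^2<\infty,
\end{equation*}
and by the formula derived above (in the chart $\{v_2\neq 0\}$, with the symmetric version in $\{v_1\neq 0\}$) together with Corollary 2.4(a) of \cite{Juc 2012}, this forces $m(Q,[v])>\nu(\log h,[v])(d+2)-1$.

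For the ``if'' direction, assume the inequality holds at every $[v]\in\mathbb P^1$. Applying the same criterion in the reverse direction yields, for each $[v]$, an $\epsilon_{[v]}>0$ with
\begin{equation*}
\int_{D_h\cap U([v],\epsilon_{[v]})}|Q|^2\,d\mathcal L^4<\infty.
\end{equation*}
The projections of the open cones $U([v],\epsilon_{[v]})\setminus\{0\}$ to $\mathbb P^1$ form an open cover of $\mathbb P^1$ in its Fubini--Study topology, so by compactness finitely many of them, say $U([v_1],\epsilon_1),\ldots,U([v_k],\epsilon_k)$, already cover $\mathbb C^2\setminus\{0\}$. Since $\{0\}$ is Lebesgue-null, we conclude
\begin{equation*}
\int_{D_h}|Q|^2\,d\mathcal L^4\leq\sum_{i=1}^{k}\int_{D_h\cap U([v_i],\epsilon_i)}|Q|^2\,d\mathcal L^4<\infty,
\end{equation*}
so $Q\in L_h^2(D_h)$.

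I do not expect any serious obstacle here, because the analytic heart of the statement is already contained in the explicit integral identity and the one-variable Lelong-number criterion that the authors have quoted. The only bookkeeping point is the two affine charts $\{v_1\neq 0\}$ and $\{v_2\neq 0\}$ of $\mathbb P^1$: the displayed integral formula is written in the chart $\{v_2\neq 0\}$, and for $[v]$ with $v_2=0$ one needs to invoke the analogous computation in the other chart, where the roles of $v_1$ and $v_2$ (and the corresponding meaning of $m(Q,[v])$ and $\nu(\log h,[v])$) are swapped. This is routine and the compactness argument then goes through unchanged.
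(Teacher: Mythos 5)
Your proposal is correct and follows essentially the same route as the paper: the authors likewise combine the displayed integral identity over $D_h\cap U([v],\epsilon)$ with Corollary 2.4(a) of \cite{Juc 2012} and then conclude by ``the standard compactness argument,'' which is exactly the finite covering of $\mathbb P^1$ by projections of the cones that you spell out. Your write-up merely makes explicit the bookkeeping (the two affine charts and the fact that each $U([v],\epsilon)$ is the full preimage of an open subset of $\mathbb P^1$) that the paper leaves to the reader.
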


Now, the above result has many consequences.

 \begin{corollary}\label{corollary:positive-kernel} For the balanced pseudoconvex domain $D_h\subset\mathbb C^2$ the following are equivalent:
\begin{itemize}
\item $K_D(0)>0$;\\
\item $K_D(z)>0$, $z\in D$;\\
\item $\mathcal L^4(D_h)<\infty$;\\
\item $\nu(\log h,[v])<1/2$, $[v]\in\mathbb P^1$.
\end{itemize}
\end{corollary}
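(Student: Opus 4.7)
The plan is to collapse all four conditions to a single one, namely the square integrability of the constant polynomial~$1$, and then read the Lelong bound off Theorem~\ref{thm:criterion} at degree zero.

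First, the direction $K_D(z)>0$ for every $z\in D_h$ $\Rightarrow$ $K_D(0)>0$ is tautological. To close the loop it suffices to prove $K_D(0)>0 \Leftrightarrow \mathcal L^4(D_h)<\infty$ together with the stronger statement that $\mathcal L^4(D_h)<\infty$ forces $K_D(z)>0$ throughout $D_h$. For the first equivalence I would exploit the orthogonality of the homogeneous components $f=\sum_{d}P_d$ recorded in the preliminaries ($\|f\|^2=\sum_d\|P_d\|^2$): if $K_D(0)>0$ then some $f\in L_h^2(D_h)$ satisfies $f(0)\neq 0$, and the zeroth component $P_0=f(0)$ is itself a nonzero constant lying in $L_h^2(D_h)$, so $1\in L_h^2(D_h)$ and $\mathcal L^4(D_h)=\|1\|^2<\infty$. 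For the second, the extremal definition of the kernel yields
\[
K_D(z)\;\geq\;\frac{|1(z)|^2}{\|1\|^2_{L^2(D_h)}}\;=\;\frac{1}{\mathcal L^4(D_h)}\;>\;0,\qquad z\in D_h,
\]
which at once delivers $K_D(0)>0$ and $K_D(z)>0$ everywhere in $D_h$.

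It then remains to identify $\mathcal L^4(D_h)<\infty$ with the Lelong bound. Since finiteness of the volume is just $1\in L_h^2(D_h)$, I would apply Theorem~\ref{thm:criterion} to the constant polynomial $Q\equiv 1\in\mathcal H^0$. With $\deg Q=0$ and $m(Q,[v])=0$ for every $[v]\in\mathbb P^1$ the criterion specializes to
\[
0 \;>\; 2\nu(\log h,[v])-1 \quad\text{for all } [v]\in\mathbb P^1,
\]
which is exactly $\nu(\log h,[v])<1/2$ uniformly in $[v]\in\mathbb P^1$, matching the fourth bullet.

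There is no serious obstacle here: the substance is in Theorem~\ref{thm:criterion}, and the corollary is essentially the degree-zero specialization combined with the elementary observation that square integrability of a nowhere-vanishing \emph{constant} function automatically produces a uniform positive lower bound on the Bergman kernel across the whole domain.
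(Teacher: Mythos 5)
Your proposal is correct and follows essentially the same route as the paper: the paper invokes the identity $K_D(0)=1/\mathcal L^4(D)$ for balanced pseudoconvex domains to collapse the first three conditions (which is exactly what your orthogonality-of-homogeneous-components argument re-derives), and then applies Theorem~\ref{thm:criterion} to $Q\equiv 1$ to obtain the Lelong-number bound $\nu(\log h,[v])<1/2$.
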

 \begin{proof}
Recall that for the balanced pseudoconvex domain we get $K_D(0)=\frac{1}{\mathcal L^4(D)}$, which easily gives the equivalence of the first three conditions equivalent.  Theorem~\ref{thm:criterion} applied to $Q\equiv 1$  gives the equivalence of its integrability to the last condition.
\end{proof}

 Similarly, we get the following.

\begin{corollary}\label{corollary:bergman-metric} For the balanced pseudoconvex domain $D_h\subset\mathbb C^2$ the following are equivalent:
\begin{itemize}
\item $D_h$ admits the Bergman metric;\\
\item the functions $z_1,z_2\in L_h^2(D_h)$;\\
\item $\nu(\log h,[v])<1/3$, $[v]\in\mathbb P^1$.
\end{itemize}
 \end{corollary}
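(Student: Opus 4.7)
I would close the triangle (second)$\Leftrightarrow$(third)$\Rightarrow$(first)$\Rightarrow$(second) in three short steps, leveraging Theorem~\ref{thm:criterion} for the algebraic equivalence and computing the Bergman form at $0$ directly for the last implication.

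\emph{Step 1: (second)$\Leftrightarrow$(third).} I would apply Theorem~\ref{thm:criterion} with $Q = z_1$. The multiplicity $m(z_1,[v])$ equals $1$ at $[v]=[0:1]$ and $0$ at every other point, so the criterion reads: $z_1 \in L_h^2(D_h)$ iff $\nu(\log h, [v]) < 1/3$ for every $[v]\neq [0:1]$ and $\nu(\log h, [0:1]) < 2/3$. The analogous statement for $z_2$ swaps the roles of the two distinguished points, so intersecting both conditions gives exactly $\nu(\log h,[v]) < 1/3$ for every $[v]\in\mathbb P^1$.

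\emph{Step 2: (third)$\Rightarrow$(first).} Under (third) we have $\nu < 1/3 < 1/2$ everywhere, so Corollary~\ref{corollary:positive-kernel} yields $K_D > 0$ on $D_h$ and $1 \in L_h^2(D_h)$; Step~1 gives additionally $z_1,z_2 \in L_h^2(D_h)$. For any $z_0 \in D_h$ and $X \in \mathbb C^2\setminus\{0\}$ the function
\begin{equation*}
f(z) := X_1(z_1 - z_{0,1}) + X_2(z_2 - z_{0,2})
\end{equation*}
lies in $L_h^2(D_h)$, vanishes at $z_0$, and satisfies $df(z_0)(X) = |X_1|^2 + |X_2|^2 \neq 0$. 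The standard extremal formula
\begin{equation*}
\beta_D^2(z_0;X) = \frac{1}{K_D(z_0)} \sup\bigl\{|dg(z_0)(X)|^2 : g \in L_h^2(D_h),\; g(z_0) = 0,\; \|g\|\le 1\bigr\}
\end{equation*}
then forces $\beta_D^2(z_0;X) > 0$, so the Bergman metric is positive definite on $D_h$.

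\emph{Step 3: (first)$\Rightarrow$(second).} Here I would evaluate $\beta_D^2(0;X)$ using the orthonormal basis $\{P_{d,j}\}$ from the preliminaries. Since each $|P_{d,j}|^2$ is a polynomial of bidegree $(d,d)$,
\begin{equation*}
K_D(z) = c_0 + \sum_{j\in\mathcal J_1} |P_{1,j}(z)|^2 + R(z),
\end{equation*}
where $c_0 = |P_{0,0}|^2 = K_D(0) > 0$ (positive by hypothesis) and $R$ is a locally uniformly convergent sum of terms of bidegree $(d,d)$ with $d \geq 2$. Writing $\log K_D = \log c_0 + \log\bigl(1 + (K_D/c_0 - 1)\bigr)$ and expanding the logarithm as a power series, each term $(K_D/c_0 - 1)^k$ with $k\geq 2$ has no bidegree-$(1,1)$ component at the origin because its factors already vanish at $0$; combined with the fact that terms in $R$ have bidegree $(d,d)$ with $d\geq 2$, one concludes
\begin{equation*}
\beta_D^2(0;X) = \frac{1}{c_0}\sum_{j\in\mathcal J_1}|P_{1,j}(X)|^2.
\end{equation*}
This Hermitian form is positive definite iff the polynomials $\{P_{1,j}\}_{j\in\mathcal J_1}$ span the two-dimensional space $\mathcal H^1 = \mathrm{span}(z_1,z_2)$; equivalently, iff $z_1,z_2 \in L_h^2(D_h)$. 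The hypothesis that $\beta_D^2(0;\cdot)$ is positive definite therefore forces (second).

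\textbf{Expected main obstacle.} The only delicate point is the bidegree bookkeeping in Step~3: one must verify that the cross terms in the expansion of $\log(1 + (K_D/c_0 - 1))$ contribute nothing to the bidegree-$(1,1)$ part at the origin, which is what collapses the Bergman form at $0$ to a clean sum over $\mathcal J_1$. Once this formula is in hand, the chain of implications is immediate.
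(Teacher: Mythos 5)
Your argument follows the same route as the paper's (much terser) proof: Theorem~\ref{thm:criterion} applied to $z_1,z_2$ gives (second)$\Leftrightarrow$(third), positive definiteness of the Bergman form at $0$ gives (first)$\Rightarrow$(second), and positivity of the kernel from Corollary~\ref{corollary:positive-kernel} together with $z_1,z_2\in L_h^2(D_h)$ gives (third)$\Rightarrow$(first); your Steps~1 and~3, including the multiplicity bookkeeping and the bidegree-$(1,1)$ computation of $\log K_D$ at the origin, correctly fill in details the paper leaves implicit. One slip in Step~2: for $f(z)=X_1(z_1-z_{0,1})+X_2(z_2-z_{0,2})$ the derivative in the direction $X$ is $X_1^2+X_2^2$, not $|X_1|^2+|X_2|^2$, and this vanishes for instance when $X=(1,i)$; you should take $f(z)=\overline{X_1}(z_1-z_{0,1})+\overline{X_2}(z_2-z_{0,2})$, which gives $df(z_0)(X)=|X_1|^2+|X_2|^2>0$, after which the extremal characterization of $\beta_D$ yields the claim as you intend.
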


   \begin{proof}
  The equivalence of the second and third condition follows from Theorem~\ref{thm:criterion} (applied to the functions $z_1$ and $z_2$). It $D_h$ admits the Bergman metric, then the positive definiteness of the Bergman metric at $0$ easily implies that $z_1,z_2$ are square integrable. On the other hand assuming the third property we get from the previous corollary the positive definiteness of the Bergman kernel. The fact that in this the functions $z_1,z_2$ are square integrable follows from Theorem~\ref{thm:criterion} which finishes the proof.
      \end{proof}

  \begin{example} To illustrate the results from above we give the following three simple examples:

  1) $h(z,w):=|zw|^{1/2}$. Then $K_{D_h}(0)=0$.

  2) $h(z,w):=|z(z-w)w|^{1/3}$. Then $K_{D_h}(0)>0$, but $D_h$ admits no Bergman metric.

  3) $h(z,w):=|z(z-w)(z+w)w|^{1/4}$. Then $D_h$ admits a Bergman metric.
  \end{example}

  \begin{corollary}\label{corollary:all-polynomials}
  Let $D_h\subset\mathbb C^2$ be a balanced pseudoconvex domain. Then $L_h^2(D_h)$ contains all the polynomials iff $\nu(\log h,[v])=0$ for all $[v]\in\mathbb P^1$.
  \end{corollary}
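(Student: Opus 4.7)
The plan is to read both directions directly off of Theorem~\ref{thm:criterion}, using the elementary observation that for every $[v]\in\mathbb P^1$ and every $d\in\mathbb N$ there is a homogeneous polynomial of degree $d$ which does not vanish on the line $[v]$ (namely $\ell^d$ for any linear form $\ell$ with $\ell(v)\neq 0$).

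For the sufficiency direction, assume $\nu(\log h,[v])=0$ for every $[v]\in\mathbb P^1$. Since $L_h^2(D_h)$ is a linear space and every polynomial is a finite sum of its homogeneous components, it is enough to show that each $P\in\mathcal H^d$ belongs to $L_h^2(D_h)$. Under the assumption, the condition in Theorem~\ref{thm:criterion} reduces to $m(P,[v])>-1$ for every $[v]\in\mathbb P^1$, which is automatic because multiplicities are non-negative integers.

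For the necessity direction, fix an arbitrary $[v]\in\mathbb P^1$ and choose a linear form $\ell$ with $\ell(v)\neq 0$. Then for every $d\geq 1$ the polynomial $\ell^d\in\mathcal H^d$ satisfies $m(\ell^d,[v])=0$. By hypothesis $\ell^d\in L_h^2(D_h)$, so Theorem~\ref{thm:criterion} applied to $Q=\ell^d$ yields
\begin{equation*}
0 \;=\; m(\ell^d,[v]) \;>\; \nu(\log h,[v])(d+2)-1,
\end{equation*}
i.e. $\nu(\log h,[v])<\frac{1}{d+2}$. Letting $d\to\infty$ forces $\nu(\log h,[v])=0$. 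Since $[v]$ was arbitrary, we are done.

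There is no real obstacle to overcome here beyond Theorem~\ref{thm:criterion} itself: the inequality it provides is exactly sharp enough to be tested by high-degree powers of linear forms (which detect the vanishing of Lelong numbers) and loose enough, when all Lelong numbers vanish, to accommodate any assignment of non-negative multiplicities, and in particular any polynomial.
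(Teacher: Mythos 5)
Your proof is correct and is essentially the argument the paper intends: the corollary is stated as a direct consequence of Theorem~\ref{thm:criterion}, and your two directions (the criterion becomes vacuous when all Lelong numbers vanish; testing with $\ell^d$ for large $d$ forces each Lelong number below $\frac{1}{d+2}$) are exactly how one reads it off. The reduction of a general polynomial to its homogeneous components via linearity of $L_h^2(D_h)$ is the only detail worth mentioning, and you handle it.
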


   \begin{remark} Recall that there are balanced (in fact even Reinhardt) domains that are not bounded but that satisfy the assumption of the previous corollary. In fact, we may take for instance
   \begin{equation}\label{remark:example}
   D:=\left\{z\in\mathbb C^2:|z_2|<1 \text{ and } |z_2|<\exp(-(\log|z_1|)^2) \text{ for $|z_1|\geq 1$}\right\}.
   \end{equation}
The fact that all the monomials belong to $L_h^2(D)$ may be verified either by the direct estimates of the appropriate integrals or by applying the description of square integrable monomials in pseudoconvex Reinhardt domains that may be found in \cite{Zwo 1999} or \cite{Zwo 2000}.
   \end{remark}

   \bigskip

 It turns out that there are domains from $\mathcal D_S$ having quite different properties as its connection with the Bergman theory is concerned. First recall that we have already seen that all such domains have their $L_h^2$ space infinitely dimensional. Additionally, we have the following
 \begin{theorem}\label{theorem:siciak-class}
 \begin{itemize}
\item There is a $D\in\mathcal D_S$ such that its four-dimensional volume $\mathcal L^4(D)=\infty$.\\
 \item There is a $D\in\mathcal D_S$ such that $K_D(z)>0$, $z\in D$ (equivalently, $\mathcal L^4(D)<\infty$) but $D$ does not admit the Bergman metric.\\
 \item There is a $D\in\mathcal D_S$ such that $D$ admits the Bergman metric.
 \end{itemize}
 \end{theorem}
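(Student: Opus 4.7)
The plan is to reduce each of the three assertions to a choice of Lelong-number profile at a countable dense subset of $\mathbb P^1$ and to build a Siciak-type $h$ realizing any prescribed profile summing to one. Corollaries~\ref{corollary:positive-kernel} and~\ref{corollary:bergman-metric} then do the rest: a Lelong number equal to $1/2$ at some direction forces $\mathcal L^4(D_h)=\infty$; a Lelong number equal to $1/3$ (with all others strictly below $1/2$) forces $K_{D_h}>0$ without the Bergman metric; all Lelong numbers strictly below $1/3$ deliver the Bergman metric.

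Fix a countable dense set $\{[v_k]\}_{k\geq 1}\subset\mathbb P^1$ and, for each $k$, a linear form $\ell_k\colon\mathbb C^2\to\mathbb C$ of operator norm one vanishing exactly on $\mathbb C v_k$. For positive weights $(\alpha_k)$ with $\sum_k\alpha_k=1$, set
\[
u(z):=\left(\sum_{k=1}^{\infty}\alpha_k\log|\ell_k(z)|\right)^{\!*},\qquad h(z):=\exp u(z).
\]
The partial sums $u_N:=\sum_{k\leq N}\alpha_k\log|\ell_k|$ are plurisubharmonic and decrease pointwise to $u$; by unitary invariance their spherical means on the unit sphere of $\mathbb C^2$ are uniformly bounded, so Fubini yields that $\sum_k\alpha_k\log|\ell_k(z)|$ is finite almost everywhere, hence $u\in\psh(\mathbb C^2)$. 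Consequently $h$ is homogeneous, logarithmically plurisubharmonic, and $h^{-1}(0)\supset\bigcup_k\mathbb C v_k$ is dense, so $D_h\in\mathcal D_S$. Monotonicity of Lelong numbers along $u_N\downarrow u$ gives $\nu(\log h,[v_k])\geq\alpha_k$, and combined with the total-mass bound $\sum_{[v]}\nu(\log h,[v])\leq 1=\sum_k\alpha_k$ recalled in the preliminaries on $\nu$, one has equality, with $\nu(\log h,[v])=0$ for every other $[v]$.

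The three assertions are then realized by distinct weight choices. For the first, take $\alpha_1=1/2$ and $\alpha_k=2^{-k}$ for $k\geq 2$; then $\nu(\log h,[v_1])=1/2$ and Corollary~\ref{corollary:positive-kernel} gives $\mathcal L^4(D_h)=\infty$. For the second, take $\alpha_1=1/3$ and $\alpha_k=(2/3)\cdot 2^{-(k-1)}$ for $k\geq 2$; every Lelong number is then at most $1/3<1/2$, so $K_{D_h}>0$ on $D_h$, whereas $\nu(\log h,[v_1])=1/3$ violates the strict inequality of Corollary~\ref{corollary:bergman-metric} and excludes the Bergman metric. For the third, take $\alpha_k=1/4$ for $k=1,2,3$ and $\alpha_k=(1/4)\cdot 2^{-(k-3)}$ for $k\geq 4$; every Lelong number is at most $1/4<1/3$, so Corollary~\ref{corollary:bergman-metric} yields the Bergman metric. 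The main obstacle is the middle paragraph: ensuring $u\not\equiv-\infty$ by the spherical-mean/Fubini estimate and pinning down the Lelong profile exactly via monotonicity plus the total-mass bound; once these two technical points are secured, the three items reduce to arithmetic bookkeeping with the $\alpha_k$.
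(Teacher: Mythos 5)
Your construction is correct, but it takes a genuinely different and more self-contained route than the paper. The paper does not build a Siciak-type function from scratch: it starts from an \emph{arbitrary} $D_h\in\mathcal D_S$ (whose existence is quoted from \cite{Sic 1982}), picks finitely many directions $[(1,a_1)],\dots,[(1,a_n)]$ at which $\nu(\log h,\cdot)=0$, and replaces $\log h$ by the convex combination $t\log h+\frac{1-t}{n}\sum_{j=1}^n\log|z_1-a_jz_2|$. The affinity of $\nu$ under convex combinations (listed in the preliminaries) then gives Lelong numbers equal to $(1-t)/n$ at the $n$ chosen directions and at most $t$ elsewhere, and tuning $n$ and $t$ against the thresholds $1/2$ and $1/3$ of Corollaries~\ref{corollary:positive-kernel} and~\ref{corollary:bergman-metric} yields the three items --- exactly the same endgame as yours. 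What your approach buys is stronger: you show that \emph{any} prescribed summable Lelong profile on a countable dense subset of $\mathbb P^1$ is realized by some $D\in\mathcal D_S$, whereas the paper only perturbs a given example; the price is that you must re-prove membership in $\mathcal D_S$ (the a.e.\ finiteness via spherical means, and the density of the zero set), which the paper gets for free. Two small points to tighten: the partial sums $u_N$ decrease only where all $|\ell_k(z)|\le 1$, so you should either work with $\sum_k\alpha_k\log\left(|\ell_k(z)|/\|z\|\right)+\log\|z\|$ or argue on the closed unit ball and extend by homogeneity; and you should say explicitly why $u^*=-\infty$ on each line $\ell_k^{-1}(0)$ (e.g.\ from the upper semicontinuous majorant $u\le\alpha_k\log|\ell_k(z)|+(1-\alpha_k)\log\|z\|$), so that $h^{-1}(0)$ is indeed dense. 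Neither point affects the validity of the argument.
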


 \begin{proof}
  Let $D=D_h$ be any domain from $\mathcal D_S$. For a positive integer $n$ choose $[(1,a_1)],\ldots,[(1,a_n)]\in\mathbb P^1$ such that $\nu(\log h,[(1,a_j)])=0$, $j=1,\ldots,n$ (it is always possible!). Then for any $t\in(0,1)$ consider the functions defined as follows
  \begin{equation}
  t \log h(z)+\frac{1-t}{n}\sum_{j=1}^n\log|z_1-a_jz_2|.
  \end{equation}

  It is now a straightforward consequence of the above corollaries that independently of the property the starting function $h$ has one may, manipulating with $n$ and $t$, get the function delivering  domains with in all the claimed three points from above.
 \end{proof}

\begin{remark}
Examples that were studied in \cite{Pfl-Zwo 2005} show that it is a kind of the Lelong number which may be responsible for the fact that the balanced domain is Bergman complete. More precisely, the following description of Bergman complete balanced domains in $\mathbb C^2$ may be correct:

$D=D_h$ is Bergman complete iff  $\nu(\log,[v])=0$ for any $[v]\in\mathbb P^1$.

Recall that all  bounded pseudoconvex balanced domains are Bergman complete (see \cite{Jar-Pfl-Zwo 2000}) as well as the domain defined in (\ref{remark:example}) (see \cite{Pfl-Zwo 2005}).

 As we have seen the domains from the class $\mathcal D_S$ may have quite different properties. But does there exist a two-dimensional domain $D=D_h\in\mathcal D_S$ such that $\nu(\log,[v])=0$ for any $[v]\in\mathbb P^1$?
 \end{remark}

\end{document}